\documentclass[reqno,a4paper,12pt]{amsart} 

\usepackage{amsmath,amscd,amsfonts,amssymb}
\usepackage{mathrsfs}

\usepackage{bbm}

\numberwithin{equation}{section}
\numberwithin{figure}{section}

\newcommand\R{\mathbb{R}}

\newcommand\Z{\mathbb{Z}}

\newcommand\T{\mathbb{T}}

\newcommand\1{\mathbbm{1}}
\newcommand\eps{\varepsilon}
\newcommand{\pphi}{\varphi}
\newcommand{\PP}{\mathbb{P}}

\renewcommand\le{\leqslant}

\renewcommand\leq{\leqslant}

\newcommand\sbt{\subset}

\newcommand{\ft}[1]{\widehat{#1}}
\newcommand{\dotprod}[2]{\langle #1 , #2 \rangle}

\theoremstyle{plain}
\newtheorem{thm}{Theorem}[section]
\newtheorem{lem}[thm]{Lemma}
\newtheorem{lemma}[thm]{Lemma}

\newtheorem*{claim*}{Claim}

\newcommand{\thmref}[1]{Theorem~\ref{#1}}

\newcommand{\lemref}[1]{Lemma~\ref{#1}}

\theoremstyle{definition}
\newtheorem{definition}[thm]{Definition}
\newtheorem*{definition*}{Definition}
\newtheorem*{remarks*}{Remarks}
\newtheorem*{remark*}{Remark}

\newenvironment{enumerate-alph}
{\begin{enumerate}
\addtolength{\itemsep}{5pt}
}
{\end{enumerate}}

\newenvironment{enumerate-num}
{\begin{enumerate}
\addtolength{\itemsep}{5pt}
}
{\end{enumerate}}

\newenvironment{enumerate-text}
{\begin{enumerate}
\addtolength{\itemsep}{5pt}
}
{\end{enumerate}}

\begin{document}

\title
[No unconditional Schauder frames of exponentials]
{There are no unconditional Schauder frames of exponentials in $L^p$ spaces, $p \ne 2$}

\author{Nir Lev}
\address{Department of Mathematics, Bar-Ilan University, Ramat-Gan 5290002, Israel}
\email{levnir@math.biu.ac.il}

\author{Anton Tselishchev}
\address{St. Petersburg Department of Steklov Mathematical Institute, Fontanka 27, St. Petersburg 191023, Russia}
\email{celis\_anton@pdmi.ras.ru}

\date{October 10, 2026}
\subjclass[2020]{42A10, 46B15, 46E30}
\keywords{Unconditional bases, Schauder frames, exponentials}
\thanks{Research supported by ISF Grant No.\ 854/25. }

\begin{abstract}
We proved recently that there are no unconditional Schauder frames of exponentials in the space $L^p(\Omega)$, $1 \le p < 2$, for any set $\Omega \subset \mathbb{R}^d$ of finite measure. For $p > 2$, we proved that the same holds provided that $\Omega$ has nonempty interior, but the general case remained open. In this note we close this gap, i.e., we establish that exponentials cannot form an unconditional Schauder frame in $L^p(\Omega)$, $p \ne 2$, without any additional assumptions on the set $\Omega$ of finite measure other than measurability.
\end{abstract}

\maketitle


\section{Introduction}

\subsection{}
A system $\{u_n\}_{n=1}^{\infty}$ in a separable Banach space $X$ is called a \emph{Schauder basis} if every $x\in X$ admits a unique series expansion $x = \sum_{n=1}^{\infty} c_nu_n$. If the series converges unconditionally for every $x \in X$, then $\{u_n\}_{n=1}^{\infty}$ is called an \emph{unconditional basis}. 

The best known example of an unconditional basis in the space $L^p[0,1]$, $1 < p < +\infty$, is the Haar basis
(see \cite[II.B.9--10, II.D.13]{Woj91}). 
On the other hand, the classical exponential system with integer frequencies $\{ e^{2\pi i n x} \}$, $n \in \Z$,
\emph{does not} form an unconditional basis in the space $L^p[0,1]$,
$p \ne 2$, see \cite[Proposition II.D.9]{Woj91}.

Moreover, for any set $\Omega\subset\R^d$ of finite measure, 
there is no unconditional basis of exponentials
in the space $L^p(\Omega)$, $p \ne 2$.
This is a consequence of a more general result due to Gaposhkin  \cite{Gap58},
who proved that the space $L^p(\Omega)$,  $p \ne 2$, does not admit a normalized unconditional basis consisting of uniformly bounded functions.

\subsection{}

Let $X$ be a separable Banach space and $X^*$ be its dual space. A system of elements
$\{(u_n, u_n^*)\}_{n=1}^{\infty} \sbt X\times X^*$ is called a \emph{Schauder frame} (or a \emph{quasibasis}) if every $x\in X$ admits a series expansion
\begin{equation}
\label{eq:R1.1}
x=\sum_{n=1}^{\infty} u_n^*(x) u_n.
\end{equation}
If the series \eqref{eq:R1.1} 
converges unconditionally for every $x \in X$, then
$\{(u_n, u_n^*)\}_{n=1}^{\infty}$
is called an \emph{unconditional Schauder frame}.
We note that the functionals $\{u_n^*\}$ are not required to be biorthogonal to $\{u_n\}$, and in general the coefficients of the series expansion \eqref{eq:R1.1} may be not unique.

In our recent paper \cite{LT25b} we considered the question of existence of unconditional Schauder frames of exponentials in the spaces $L^p(\Omega)$, $p\neq 2$, where $\Omega\subset\R^d$ is a set of finite measure; we refer the reader to the introduction of \cite{LT25b} for a broader  motivation and context for this question.  By an exponential here we mean a function of the form 
\begin{equation}
e_\lambda(x) = e^{2\pi i \langle \lambda, x\rangle},
\quad x \in \R^d,
\end{equation}
where the frequency $\lambda$ is an arbitrary point in $\R^d$.

We proved in \cite{LT25b} that if $\Omega\subset\R^d$ is an arbitrary measurable set of positive and finite measure, then for $1 < p < 2$ the space $L^p(\Omega)$ does not admit any unconditional Schauder frame of exponentials. The same is true for $p=1$, see \cite[Section 4.3]{BC20}.

For $p>2$, the same result was proved in \cite{LT25b} under the extra assumption that $\Omega$ has nonempty interior, but the general case remained open. In this paper, we remove this additional assumption on the set $\Omega$ and prove the result in the general case.

\begin{thm}\label{thm:mainexp}
Let $\Omega\subset\R^d$ be a measurable set of positive and finite measure. Then the space $L^p(\Omega)$, $p>2$, does not admit any unconditional Schauder frame of exponentials, i.e.\ of the form 
$\{(e_\lambda, e_\lambda^*)\}$, $\lambda\in\Lambda$,
where $\Lambda\subset\R^d$ is any countable set
and $\{e_\lambda^*\}$ are arbitrary elements of the dual 
space $(L^p(\Omega))^*$.
\end{thm}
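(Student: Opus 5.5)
We argue by contradiction: assume $\{(e_\lambda,e_\lambda^*)\}_{\lambda\in\Lambda}$ is an unconditional Schauder frame in $L^p(\Omega)$, $p>2$. The plan is to turn unconditionality into a square function estimate and then test it against functions whose frame expansion has a prescribed shape.

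\textbf{Step 1: square function inequality.} By the uniform boundedness principle, unconditional convergence gives a constant $C$ such that
\[
\Big\|\sum_{\lambda} \eps_\lambda e_\lambda^*(f) e_\lambda\Big\|_p \le C\|f\|_p
\]
for all signs $\eps_\lambda$ and all $f\in L^p(\Omega)$. Average over random signs and apply Khintchine's inequality pointwise. Since $|e_\lambda|=1$ on $\Omega$, the pointwise square function is constant on $\Omega$, namely $\bigl(\sum_\lambda|e_\lambda^*(f)|^2\bigr)^{1/2}$. This yields
\[
|\Omega|^{1/p}\Big(\sum_\lambda |e_\lambda^*(f)|^2\Big)^{1/2}\le C'\|f\|_p ,
\]
so the coefficient map $T f=(e_\lambda^*(f))$ is bounded $L^p(\Omega)\to\ell^2(\Lambda)$. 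The reconstruction map $S(c)=\sum_\lambda c_\lambda e_\lambda$, restricted to the range of $T$, reproduces $f=S T f$.

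Choosing random signs gives a second, more useful fact. For a.e.\ choice of signs,
\[
\Big\|\sum_\lambda \pm e_\lambda^*(f)e_\lambda\Big\|_p\le C\|f\|_p ,
\]
and taking expectations of the $p$-th power gives the two-sided Khintchine comparison $\|f\|_p\approx\|Tf\|_{\ell^2}$. Thus $L^p(\Omega)$ embeds isomorphically, via $T$, into $\ell^2$.

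\textbf{Step 2: contradiction from the geometry of $L^p$.} The isomorphism $T$ onto a subspace of $\ell^2$ is impossible for $p\ne2$, since $L^p(\Omega)$ contains $\ell^p$ (for instance, disjointly supported normalized functions), which does not embed into a Hilbert space.

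The delicate point is the lower bound $\|f\|_p\lesssim\|Tf\|_{\ell^2}$. It follows from $f=\mathbb E\,\bigl[\bigl(\sum \eps_\lambda e_\lambda^*(f)e_\lambda\bigr)\cdot(\text{sign-dependent factor})\bigr]$ only if $f$ itself is recovered from signed sums. Here I would use the standard trick: $\sum_\lambda e_\lambda^*(f) e_\lambda$ is the average over $\eps$ of $\eps_{\lambda_0}$-weighted terms, combined with duality against $g\in L^{p'}$. One writes
\[
\langle f,g\rangle=\sum_\lambda e_\lambda^*(f)\langle e_\lambda,g\rangle ,
\]
and by Cauchy--Schwarz
\[
|\langle f,g\rangle|\le\|Tf\|_{\ell^2}\Big(\sum_\lambda|\langle e_\lambda,g\rangle|^2\Big)^{1/2}.
\]
This requires an upper bound $\bigl(\sum_\lambda|\langle e_\lambda,g\rangle|^2\bigr)^{1/2}\lesssim\|g\|_{p'}$, a Bessel-type property of the frequencies $\Lambda$ in $L^{p'}(\Omega)$.

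\textbf{Main obstacle.} The Bessel-type bound is exactly where earlier work needed interior points: it holds for uniformly discrete $\Lambda$ and $\Omega$ containing a ball. For general $\Omega$, I would instead apply Step 1 to the dual frame in $L^{p'}$, with $p'<2$, which is the case already settled in \cite{LT25b}. The difficulty is that the dual system $\{(e_\lambda^*,e_\lambda)\}$ is only weak$^*$-unconditional and is not made of exponentials.

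My first attempt would be to exploit translation invariance. Modulating $f\mapsto e_\xi f$ shifts the frequencies to $\Lambda+\xi$, and averaging over $\xi$ in a large cube should replace the missing discreteness and yield the required $\ell^2$ lower estimate. Concretely, I would show that Step 1 forces $\sum_{\lambda\in\Lambda\cap(\xi+Q)}\|e_\lambda^*\|^2$ to be uniformly bounded. I would then derive a contradiction by testing on functions $f=\1_E$ with $E\subset\Omega$ small, comparing $|E|^{1/p}$ with $|E|^{1/2}$ as $|E|\to0$, which fails for $p>2$.
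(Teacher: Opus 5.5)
Your Step 1 is correct: Khintchine applied pointwise, with $|e_\lambda|=1$ on $\Omega$, gives $|\Omega|^{1/p}\|Tf\|_{\ell^2}\lesssim\|f\|_p$. Nothing after it is established, so the proposal has a genuine gap.

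\textbf{The lower bound.} Khintchine only yields $\mathbb{E}\|\sum_\lambda\eps_\lambda e_\lambda^*(f)e_\lambda\|_p^p\approx|\Omega|\,\|Tf\|_{\ell^2}^p$. To get $\|f\|_p\lesssim\|Tf\|_{\ell^2}$ you would need $\|f\|_p\lesssim\|\sum_\lambda\eps_\lambda e_\lambda^*(f)e_\lambda\|_p$. This holds for an unconditional basis, since the coefficients of the signed sum are exactly $\eps_\lambda e_\lambda^*(f)$ and the signs can be undone. It fails for a frame, where the functionals are not biorthogonal. That lower bound would by itself embed $L^p$ into $\ell^2$, so it is the entire content of the theorem, not a technical point.

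\textbf{The duality repair.} It needs $\sum_\lambda|\langle e_\lambda,g\rangle|^2\lesssim\|g\|_{p'}^2$. This is false even for $\Lambda=\Z$, $\Omega=[0,1]$: the left side is $\|g\|_2^2$, and $g=\1_{[0,\delta]}$ shows that $\|g\|_2\le C\|g\|_{p'}$ fails for $p'<2$.

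\textbf{The last paragraph.} It is only a heuristic. Step 1 is equivalent to boundedness of $c\mapsto\sum_\lambda c_\lambda e_\lambda^*$ from $\ell^2$ to $L^q$. That gives $\sup_\lambda\|e_\lambda^*\|_q<\infty$, but does not by itself control $\sum_{\lambda\in\Lambda\cap(\xi+Q)}\|e_\lambda^*\|_q^2$ when $\Lambda$ clusters. Even a uniform bound on such local sums would only make averages of $\|e_\lambda^*\|_q^2$ bounded, whereas the argument that works needs them to tend to $0$. The comparison of $|E|^{1/p}$ with $|E|^{1/2}$ would require both the missing lower bound and an $L^2$-type bound on the coefficients of $\1_E$.

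\textbf{What the paper does instead.} The missing ingredients are these.
\begin{enumerate}
\item \emph{Reduction to the torus.} After truncating, $\Omega$ is bounded. A Taylor expansion of $e_{\mu-\lambda}-1$ together with $K$-unconditionality shows that moving every frequency by at most $\eta$ preserves an unconditional approximate frame, hence, after inverting the frame operator, an unconditional frame. So $\Lambda$ can be pushed onto $\gamma\Z^d$ (merging repeated points) and rescaled, giving a frame $\{(e_m,e_m^*)\}_{m\in\Z^d}$ in $L^p(E)$ with $E\subset\T^d$.
\item \emph{Extension operator.} The useful invariance is spatial translation $x\mapsto x-t$ on the torus, not modulation. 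It leaves the frequencies in place and multiplies the coefficients by unimodular factors, which unconditionality absorbs. Integrating $\int_E|\pphi(x-t)|^p\,dx\le K^p\|f\|_p^p$ over $t\in\T^d$ shows the expansion converges unconditionally in $L^p(\T^d)$, giving an extension operator $A$.
\item \emph{Decay of the dual norms.} The dual operator $A^*g=\sum_m\ft g(-m)e_m^*$ is defined on all of $L^q(\T^d)$. Cotype $2$ of $L^q$, $q<2$, yields $\sum_m|\ft g(-m)|^2\|e_m^*\|_q^2\lesssim\|g\|_q^2$. Testing with $g=e_nD_N$, whose norm is $O(N^{d/p})$ rather than $N^{d/2}$, gives $|I_N|^{-1}\sum_{m\in I_N}\|e_{m-n}^*\|_q^2\lesssim N^{d(2/p-1)}\to0$. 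This is exactly where $p>2$ enters.
\item \emph{Contradiction.} Consequently $f_N=|I_N|^{-1}\sum_{m\in I_N}e_{-m}Ae_m$ is bounded and weakly null in $L^p(\T^d)$, yet $f_N=1$ a.e.\ on $E$.
\end{enumerate}
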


Thus the problem of existence of unconditional Schauder frames of exponentials in $L^p(\Omega)$ spaces is completely resolved in the negative for all $p \ne 2$.

To the contrary, we proved in \cite{LT25b} that for any $p>1$ there exists an unconditional Schauder frame
$\{(g_n, g_n^*)\}$  in  $L^p(\Omega)$ consisting of \emph{unimodular functions}, i.e.\ satisfying the condition
$|g_n(x)| = 1$ a.e.
In particular, this shows that Gaposhkin's result \cite{Gap58} fails if the word ``basis'' is replaced with ``Schauder frame''. We remark that the space $L^1$ does not admit any unconditional Schauder frames, see \cite[Section 4.3]{BC20}.

\subsection{}
Our motivation for studying this problem in part comes from the recent results on the corresponding problem for systems formed by translates of a single function. It was proved in \cite{OZ92}, \cite{OSSZ11}, \cite{FOSZ14} that a system of translates cannot form an unconditional basis in the space $L^p(\R)$ for any $p$. However, for $p>2$ there exists a system of translates which forms an unconditional Schauder frame in $L^p(\R)$ \cite{FOSZ14}. On the other hand, we proved in \cite{LT25a} that if $1\le p\le 2$ then no system of translates may form an unconditional Schauder frame for the space $L^p(\R)$. A similar problem for Gabor systems was addressed in our recent paper \cite{LT26}.

\subsection*{Disclosure of AI usage}
GPT-6 Astra found the initial proof of the main result.
The text of the paper was written by the authors. 
GitHub Copilot was used for inline suggestions and autocompletion during the writing and typesetting.


\section{Preliminaries}

In this section we review some basic facts that will be used later in the proof.

\subsection{}
We start with a basic criterion for unconditional convergence.

\begin{lemma}
\label{lem:unc_criterion}
Let $X$ be a reflexive Banach space, and $\{x_n\}_{n=1}^\infty \subset X$. Assume that there is a constant $K$ such that for any $N$ and arbitrary scalars $\{\theta_n\}$ with $|\theta_n|\le 1$, 
\begin{equation}
\label{eq:weakly_unconditionally_convergent}
\Big\| \sum_{n=1}^N \theta_n x_n \Big\| \leq K.
\end{equation}
Then the series $\sum_{n=1}^\infty   x_n$ converges unconditionally in $X$.
\end{lemma}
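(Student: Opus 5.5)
The plan is to combine the Bessaga--Pełczyński theorem with the fact that a reflexive space contains no copy of $c_0$. Concretely, I will show that $\sum x_n$ is a \emph{weakly unconditionally Cauchy} series, deduce that its partial sums are weakly Cauchy, and then use reflexivity to get unconditional convergence.

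\textbf{Step 1: the series is weakly unconditionally Cauchy.} Fix $x^*\in X^*$ with $\|x^*\|\le 1$ and an integer $N$. Choose unimodular scalars $\theta_n$ such that $\theta_n x^*(x_n)=|x^*(x_n)|$. Applying \eqref{eq:weakly_unconditionally_convergent} to these $\theta_n$ gives
\[
\sum_{n=1}^N |x^*(x_n)| = x^*\Big(\sum_{n=1}^N \theta_n x_n\Big) \le K .
\]
Letting $N\to\infty$, we obtain $\sum_n |x^*(x_n)|\le K\|x^*\|$ for every $x^*\in X^*$.

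\textbf{Step 2: Bessaga--Pełczyński.} By the Bessaga--Pełczyński theorem, in a Banach space containing no isomorphic copy of $c_0$, every weakly unconditionally Cauchy series is unconditionally convergent. A reflexive space cannot contain $c_0$, because closed subspaces of reflexive spaces are reflexive and $c_0$ is not. The conclusion follows.

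\textbf{Self-contained version of Step 2.} If one prefers not to quote the theorem, I would argue as follows.

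First, every subseries converges weakly. Let $A\subset\mathbb N$ and $s_N=\sum_{n\le N,\,n\in A}x_n$. By Step 1, $x^*(s_N)$ converges for every $x^*\in X^*$. The sequence $(s_N)$ is bounded by $K$. By reflexivity (Eberlein--Šmulian), some subsequence of $(s_N)$ converges weakly to a limit $s_A$. Since every functional converges along the whole sequence, in fact $s_N\to s_A$ weakly.

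Second, suppose unconditional convergence fails. Then the series is not unconditionally Cauchy, so there exist $\varepsilon>0$ and disjoint finite blocks $F_1<F_2<\cdots$ with $\|y_k\|\ge\varepsilon$, where $y_k=\sum_{n\in F_k}x_n$. By \eqref{eq:weakly_unconditionally_convergent}, for any scalars $|a_k|\le1$,
\[
\Big\|\sum_k a_k y_k\Big\|\le K .
\]
Moreover $y_k\to0$ weakly, since $\sum_n|x^*(x_n)|<\infty$.

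The remaining work, and the main technical point, is to pass to a subsequence of $(y_k)$ that is a seminormalized basic sequence; Bessaga--Pełczyński selection applies because $y_k\to 0$ weakly and $\|y_k\|\ge\varepsilon$. For such a basic subsequence the basis constant gives the lower estimate $\|\sum a_k y_k\|\ge c\max_k|a_k|$, and the upper bound above gives $\|\sum a_k y_k\|\le K\max_k|a_k|$. Hence the subsequence spans a copy of $c_0$ inside $X$, which contradicts reflexivity.

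In writing this up I would simply cite the Bessaga--Pełczyński theorem, for example from Albiac--Kalton or Wojtaszczyk, so that Step 1 together with the exclusion of $c_0$ constitutes the whole proof.
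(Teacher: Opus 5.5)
Your proposal is correct and is essentially the paper's argument: the uniform bound makes $\sum x_n$ weakly unconditionally Cauchy, a reflexive space contains no copy of $c_0$, and the Bessaga--Pe\l czy\'nski theorem (which the paper cites from Wojtaszczyk, II.D.5) then gives unconditional convergence. Your self-contained sketch is a correct unpacking of that theorem: it extracts a seminormalized weakly null block sequence and passes to a basic subsequence equivalent to the unit vector basis of $c_0$.
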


\begin{proof}
Indeed, the condition \eqref{eq:weakly_unconditionally_convergent} implies that the series  $\sum_{n=1}^\infty x_n$ is weakly unconditionally convergent (see \cite[II.D.3]{Woj91}). On the other hand, the space $X$ being reflexive does not contain any subspace isomorphic to $c_0$ (see \cite[Theorem II.A.14]{Woj91}). Hence every weakly unconditionally convergent series in $X$ is unconditionally convergent (see \cite[Proposition II.D.5]{Woj91}).
\end{proof}

\subsection{}
The next lemma states a basic property of unconditional Schauder frames, that can be proved
using the uniform boundedness principle in the same way 
as a similar statement for unconditional Schauder bases (see e.g.\ \cite[Proposition 3.1.3]{AK16}).

\begin{lemma}
\label{lem:unc_constant}
Let $\{(u_j, u_j^*)\}_{j=1}^\infty$ be an unconditional Schauder frame in 
a Banach space $X$.  Then there exists a constant $K$ such that for any 
$x\in X$ and for any
sequence of scalars $\{\theta_j\}$ satisfying  $|\theta_j|\le 1$, we have
\begin{equation}
	\label{eqtheta}
	\Big\| \sum_{j=1}^\infty \theta_j u_j^*(x) u_j \Big\| \leq K \|x\|
\end{equation}
and the series in \eqref{eqtheta} converges unconditionally.
\end{lemma}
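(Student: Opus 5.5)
The plan is to prove Lemma~\ref{lem:unc_constant} by two applications of the uniform boundedness principle, as for unconditional bases. Denote by $\mathcal{F}$ the family of finite subsets $F \subset \mathbb{N}$, and for a bounded sequence $\theta = \{\theta_j\}$ with $|\theta_j| \le 1$ consider the finite-rank operators
\[
T_{F,\theta}(x) = \sum_{j \in F} \theta_j u_j^*(x) u_j, \qquad x \in X,
\]
which are bounded because each $u_j^*$ belongs to $X^*$.

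The first step is pointwise boundedness. Fix $x \in X$. The series $\sum_j u_j^*(x) u_j$ converges unconditionally, and in a Banach space this is equivalent to the Cauchy criterion over finite sets: there is a finite $F_0$ such that $\|\sum_{j\in G} u_j^*(x)u_j\| \le 1$ for every finite $G$ disjoint from $F_0$. Hence the partial sums over finite sets are bounded, say by $M_x$. Next I pass from $\pm$-type sums to general $\theta$. For real scalars, write $\theta_j$ as an average of signs, or equivalently use convexity: the map $\theta \mapsto \sum_{j\in F}\theta_j a_j$ is linear, so its norm on the cube $[-1,1]^F$ is maximized at vertices, and a vertex sum is a difference of two subset sums, bounded by $2M_x$. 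For complex $\theta$, I split into real and imaginary parts, which gives the bound $4M_x$. Thus $\sup_{F,\theta}\|T_{F,\theta}x\| < \infty$ for each $x$. The uniform boundedness principle then yields a single $K$ with $\|T_{F,\theta}\| \le K$ for all finite $F$ and all admissible $\theta$.

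The second step is to handle the infinite sum. For fixed $x$ and $\theta$, the same Cauchy criterion, together with the vertex and convexity argument applied to tails, shows that $\sum_j \theta_j u_j^*(x) u_j$ satisfies the finite-set Cauchy condition. Its tail subset sums outside $F_0$ are bounded by $4\varepsilon$ when the tails of the original series are bounded by $\varepsilon$. The series therefore converges unconditionally, with no reflexivity needed; this also makes Lemma~\ref{lem:unc_criterion} unnecessary here. Its sum is the limit of $T_{F,\theta}x$ along finite sets, so its norm is at most $K\|x\|$, which gives \eqref{eqtheta}.

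The only mildly delicate point is the reduction from arbitrary bounded multipliers to subset sums. The extreme-point argument handles it, and I expect no real obstacle.
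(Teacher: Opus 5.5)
Your proof is correct. It is exactly the uniform-boundedness argument for unconditional bases (Albiac--Kalton, Proposition 3.1.3) that the paper cites instead of writing out a proof: the Cauchy criterion gives pointwise bounds, the extreme-point reduction passes from $|\theta_j|\le 1$ to subset sums, and the same criterion gives unconditional convergence of the infinite sum. One cosmetic slip: you announce two applications of the uniform boundedness principle but, correctly, use only one.
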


In this case we say that  
$\{(u_j, u_j^*)\}_{j=1}^\infty$ 
is a \emph{$K$-unconditional} Schauder frame.

\subsection{}

A \emph{Rademacher sequence} is a sequence of i.i.d.\ random variables $\{\eps_n\}$ defined on some probability space $(\Omega, \PP)$ such that $\PP(\eps_n = 1) = \PP(\eps_n = -1) = 1/2$ for every $n$. We will use the following lemma, which formulates that fact that the space $L^p(\Omega)$, $1\le p\le 2$, has cotype $2$; see \cite[Theorem 6.2.14]{AK16} for a proof.

\begin{lemma}
\label{lem:cotype}
Let $1\le p \le 2$. There exists a positive constant $C_p$ such that for any finite 
sequence of functions $f_1, f_2, \ldots, f_N \in L^p(\Omega)$ we have 
\begin{equation}
\label{eq:cotype}
\Big( \sum_{j=1}^N \|f_j\|_p^2 \Big)^{1/2}\leq C_p \, \mathbb{E} \Big\|\sum_{j=1}^N \eps_j f_j\Big\|_p.
\end{equation}
\end{lemma}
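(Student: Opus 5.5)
The plan is to combine Khintchine's inequality, applied pointwise in $x$, with two elementary norm inequalities: Minkowski's integral inequality to move the expectation inside the $L^p$ norm, and the reverse Minkowski inequality for the exponent $p/2\le 1$. Working with the first moment $\mathbb{E}|\cdot|$ directly avoids any appeal to the Kahane--Khintchine equivalence of moments.

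\emph{Step 1: move the expectation inside.} Put $S(x,\omega)=\sum_{j=1}^N \eps_j(\omega) f_j(x)$. Since $p\ge 1$, the $L^p(\Omega)$ norm is a norm, so Minkowski's integral inequality (the norm of an average is at most the average of the norms) applied to the nonnegative function $|S(\cdot,\omega)|$ gives
\[
\mathbb{E}\Big\|\sum_{j=1}^N \eps_j f_j\Big\|_p \ \ge\ \Big\| \, \mathbb{E}\,|S(\cdot,\omega)| \, \Big\|_p .
\]

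\emph{Step 2: Khintchine pointwise.} For each fixed $x$, Khintchine's inequality with exponent $1$ gives
\[
\mathbb{E}|S(x,\omega)| \ \ge\ A_1 \Big(\sum_{j=1}^N |f_j(x)|^2\Big)^{1/2},
\]
with an absolute constant $A_1>0$ (for instance $A_1=1/\sqrt2$ by Szarek's theorem). For complex-valued $f_j$, one applies this to the real and imaginary parts separately, at the cost of a factor of $2$ in the constant. Hence
\[
\mathbb{E}\Big\|\sum_{j=1}^N \eps_j f_j\Big\|_p \ \ge\ A_1 \Big\| \Big(\sum_{j=1}^N |f_j|^2\Big)^{1/2}\Big\|_p .
\]

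\emph{Step 3: reverse Minkowski.} We have
\[
\Big\|\Big(\sum_j |f_j|^2\Big)^{1/2}\Big\|_p^2 = \Big\|\sum_j |f_j|^2\Big\|_{p/2}.
\]
Since $0<p/2\le 1$, the functional $g\mapsto \|g\|_{p/2}$ is superadditive on nonnegative functions; this is the reverse Minkowski inequality, which follows from the concavity of $t\mapsto t^{p/2}$. Therefore
\[
\Big\|\sum_j |f_j|^2\Big\|_{p/2} \ \ge\ \sum_j \big\||f_j|^2\big\|_{p/2} = \sum_j \|f_j\|_p^2 .
\]
Combining Steps 1--3 yields \eqref{eq:cotype} with $C_p = 1/A_1$, or $2/A_1$ in the complex case.

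\emph{Main obstacle.} The only nontrivial input is the $L^1$ Khintchine lower bound in Step 2. Its proof is classical: by Hölder, $\mathbb{E}|S|^2 \le (\mathbb{E}|S|)^{2/3}(\mathbb{E}|S|^4)^{1/3}$, and the elementary fourth-moment bound $\mathbb{E}|S|^4\le 3(\mathbb{E}|S|^2)^2$ then gives $\mathbb{E}|S| \ge 3^{-1/2}(\mathbb{E}|S|^2)^{1/2}$. Everything else is routine. The one point of care is that the direction of each inequality matches the requirement $1\le p\le 2$: Step 1 uses $p\ge1$, and Step 3 uses $p\le 2$.
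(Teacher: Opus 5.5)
Your argument is correct and complete. The paper gives no proof of its own and simply cites [AK16, Theorem 6.2.14]; the standard proof there uses the same ingredients you do: Khintchine's inequality applied pointwise, then the concavity of $t\mapsto t^{p/2}$ to get $\|(\sum_j|f_j|^2)^{1/2}\|_p\ge(\sum_j\|f_j\|_p^2)^{1/2}$. Your one refinement is to apply Minkowski's integral inequality to $\mathbb{E}|S|$, rather than Fubini to the $p$-th moment. This reaches the first-moment form $\mathbb{E}\|\sum_j\eps_j f_j\|_p$ of the statement directly, whereas the Fubini route would need Kahane's inequality for that last step.
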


\subsection{}

Following \cite{FOSZ14} we introduce the notion of an 
approximate Schauder frame.

\begin{definition}
Let $X$ be a separable Banach space with dual space $X^*$. 
A sequence of elements $\{(u_n, u_n^*)\} \subset X\times X^*$ is called an \emph{approximate Schauder frame} for $X$ if for any $x\in X$ the series
\begin{equation}\label{eq:approx_frame}
	Sx = \sum_n u_n^*(x) u_n
\end{equation}
converges in $X$ and defines a bounded and invertible linear operator $S: X \to X$. If additionally the series \eqref{eq:approx_frame} converges unconditionally for every $x\in X$, then the system 
$\{(u_n, u_n^*)\}$ is called an \emph{unconditional approximate Schauder frame}.
\end{definition} 

The following statement is easy to verify (see \cite[Lemma 3.1]{FOSZ14}).

\begin{lem}\label{lem:approxframe}
If $\{(u_n, u_n^*)\}$ is an 
unconditional approximate Schauder frame for $X$, 
then $\{(u_n, (S^{-1})^* u_n^*)\}$
forms an unconditional Schauder frame for $X$.
\end{lem}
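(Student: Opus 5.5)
The plan is to verify the frame expansion directly, by applying the approximate frame expansion to the preimage $S^{-1}x$ instead of to $x$ itself.

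First I check that the new functionals are legitimate. Since $S$ is bounded and invertible on $X$, the inverse $S^{-1}$ is a bounded linear operator on $X$; this is either part of the definition or follows from the open mapping theorem. Hence its adjoint $(S^{-1})^*$ maps $X^*$ into $X^*$, and each $v_n^* := (S^{-1})^* u_n^*$ is an element of $X^*$. By the definition of the adjoint,
\begin{equation*}
v_n^*(x) = u_n^*(S^{-1}x), \qquad x \in X .
\end{equation*}

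Next I establish the expansion. Fix $x \in X$ and set $y = S^{-1}x \in X$. By assumption the series $\sum_n u_n^*(y) u_n$ converges unconditionally in $X$, and its sum is $Sy = x$. Rewriting its terms gives
\begin{equation*}
\sum_n v_n^*(x)\, u_n = \sum_n u_n^*(S^{-1}x)\, u_n = S(S^{-1}x) = x ,
\end{equation*}
so every $x$ admits an expansion of the form \eqref{eq:R1.1} with respect to the pairs $\{(u_n, v_n^*)\}$.

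Finally I address unconditionality. The series above is literally the series defining $Sy$, so it converges unconditionally. In a Banach space an unconditionally convergent series has every rearrangement converging to the same sum, so every rearrangement again sums to $x$. Thus $\{(u_n, (S^{-1})^* u_n^*)\}$ is an unconditional Schauder frame for $X$.

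No step presents a real obstacle. The only point needing care is the identification $((S^{-1})^*u_n^*)(x) = u_n^*(S^{-1}x)$, together with the observation that the expansion must be applied to $S^{-1}x$ rather than to $x$.
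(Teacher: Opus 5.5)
Your proof is correct. It is exactly the standard verification the paper has in mind; the paper omits the proof as easy and refers to \cite[Lemma 3.1]{FOSZ14}. Applying the unconditionally convergent expansion to $S^{-1}x$ and using $((S^{-1})^*u_n^*)(x)=u_n^*(S^{-1}x)$ gives $\sum_n ((S^{-1})^*u_n^*)(x)\,u_n = S(S^{-1}x) = x$, and unconditional convergence carries over because this is literally the series defining $S(S^{-1}x)$.
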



\section{Stability under small perturbation of frequencies}

Let $\Omega\subset\R^d$ be a measurable set of positive and finite measure, and assume that there exist
two sequences $\{\lambda_n\}\subset \R^d$ and $\{g_n^*\} \subset L^p(\Omega)^*$ such that  the system $\{(e_{\lambda_n}, g_{n}^*)\}$ forms an unconditional Schauder frame in the space $L^p(\Omega)$, $p > 2$. Our goal is to show that this leads to a contradiction. 

By intersecting the set $\Omega$ with the cube $[-R,R]^d$ for a sufficiently large $R$, we may  assume in addition that $\Omega$ is a bounded set.

The next lemma shows that a small perturbation of the frequencies in the exponentials keeps the system an unconditional approximate Schauder frame.

\begin{lem}\label{lem:perturb}
Let $\Omega\subset\R^d$ be a bounded measurable set of positive measure, and assume that $\{(e_{\lambda_n}, g_{n}^*)\}$ forms an unconditional Schauder frame in $L^p(\Omega)$, $1 < p < + \infty$. There is $\eta > 0$ such that if a sequence $\{\mu_n\} \sbt \R^d$ satisfies $|\mu_n-\lambda_n| \le \eta$ for all $n$, then the system $\{(e_{\mu_n}, g_{n}^*)\}$ forms an unconditional approximate  Schauder frame in $L^p(\Omega)$.
\end{lem}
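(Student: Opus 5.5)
Our plan is to compare the perturbed series with the original frame expansion. We write $e_{\mu_n} = e_{\lambda_n}\cdot e_{\mu_n-\lambda_n}$ and expand the small factor in a Taylor series on the bounded set $\Omega$. Since $\Omega\subset[-R,R]^d$, for $\delta=\mu_n-\lambda_n$ with $|\delta|\le\eta$ we have
\[
e_{\delta}(x) - 1 = \sum_{k\ge 1} \frac{(2\pi i\langle \delta, x\rangle)^k}{k!}
= \sum_{k\ge1}\sum_{|\alpha|=k} c_{\alpha}\,\delta^\alpha x^\alpha ,
\]
with multinomial coefficients satisfying $\sum_{|\alpha|=k}|c_\alpha||\delta^\alpha|\,\|x^\alpha\|_{L^\infty(\Omega)}\le (2\pi\sqrt d R\eta)^k/k!$.

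Fix $x\in L^p(\Omega)$ and scalars $|\theta_n|\le1$, and consider the finite sums
\[
\sum_{n=1}^N \theta_n g_n^*(x)\,(e_{\mu_n}-e_{\lambda_n})
= \sum_{\alpha\ne0} c_\alpha\, x^\alpha \sum_{n=1}^N \theta_n (\mu_n-\lambda_n)^\alpha g_n^*(x)\, e_{\lambda_n}.
\]
For each multi-index $\alpha$ the numbers $\theta_n(\mu_n-\lambda_n)^\alpha/\eta^{|\alpha|}$ have modulus at most $1$. Hence Lemma~\ref{lem:unc_constant} bounds the inner sum by $K\eta^{|\alpha|}\|x\|_p$. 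Multiplication by the monomial $x^\alpha$ costs at most the factor $\|x^\alpha\|_{L^\infty(\Omega)}\le (\sqrt d R)^{|\alpha|}$. Summing over $\alpha$ we obtain
\[
\Big\|\sum_{n=1}^N \theta_n g_n^*(x)(e_{\mu_n}-e_{\lambda_n})\Big\|_p \le K\big(e^{2\pi\sqrt d R\eta}-1\big)\|x\|_p =: \kappa\|x\|_p ,
\]
uniformly in $N$ and $\theta$. Since $e_{\lambda_n}$ terms are also uniformly bounded by $K\|x\|_p$, we conclude that $\sum_n g_n^*(x)e_{\mu_n}$ has uniformly bounded partial sums with arbitrary bounded multipliers. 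Lemma~\ref{lem:unc_criterion} applies because $L^p$ is reflexive for $1<p<\infty$, so this series converges unconditionally.

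It remains to check that $S$ is bounded and invertible. Define $Sx=\sum_n g_n^*(x)e_{\mu_n}$ and $Tx = Sx - x = \sum_n g_n^*(x)(e_{\mu_n}-e_{\lambda_n})$. Passing to the limit in the bound above with $\theta_n=1$ gives $\|T\|\le\kappa$. Choose $\eta$ so small that $\kappa<1$. Then $S=I+T$ is bounded and invertible by the Neumann series, and the system $\{(e_{\mu_n}, g_n^*)\}$ is an unconditional approximate Schauder frame.

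The main point requiring care is the interchange of the Taylor sum over $\alpha$ with the sum over $n$. For finite $N$ this is harmless because the Taylor series converges absolutely in $L^\infty(\Omega)$. The uniformity in $N$ comes entirely from Lemma~\ref{lem:unc_constant}, applied with the modified multipliers for each $\alpha$ separately.
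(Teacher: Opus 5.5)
Your proposal is correct and follows the paper's proof essentially step for step. You Taylor-expand $e_{\mu_n-\lambda_n}-1$ on $\Omega\subset[-R,R]^d$, apply $K$-unconditionality separately for each $\alpha$ with multipliers $\theta_n(\mu_n-\lambda_n)^\alpha$ bounded by $\eta^{|\alpha|}$, sum over $\alpha$, and then conclude with Lemma~\ref{lem:unc_criterion} and a Neumann series for $S=I+T$. The only slip is the constant, which is harmless since any bound tending to $0$ with $\eta$ suffices. The per-$\alpha$ bounds use a uniform bound on $|(\mu_n-\lambda_n)^\alpha|$, so they cannot use the single-$\delta$ multinomial identity, and $\sum_{|\alpha|=k}k!/\alpha!=d^k$. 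Summing them therefore gives $K(e^{2\pi dR\eta}-1)$ with $|x^\alpha|\le R^{|\alpha|}$, as in the paper, or $K(e^{2\pi d^{3/2}R\eta}-1)$ with your $(\sqrt d R)^{|\alpha|}$, not $K(e^{2\pi\sqrt d R\eta}-1)$.
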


This result may be compared to the classical stability of exponential frames in the Hilbert space $L^2(\Omega)$, see e.g.\ \cite[Chapter 4, Section 8]{You01}. However, note that the classical notion of a frame in a Hilbert space is defined in terms of inequalities, and not merely in terms of an unconditionally convergent expansion of the form \eqref{eq:R1.1}.

\begin{proof}[Proof of Lemma~\ref{lem:perturb}]
We assume that $\Omega\subset [-R,R]^d$ and that the system $\{(e_{\lambda_n}, g_{n}^*)\}$ forms a $K$-unconditional Schauder frame in the space $L^p(\Omega)$. Let  $\{\mu_n\} \subset \R^d$ be an arbitrary sequence satisfying $|\mu_n-\lambda_n| \le \eta$ for all $n$.

We will use the Taylor series expansion of the exponential function in the form
\begin{equation}
	\label{eq:3.1}
	e_\lambda(x) = \sum_\alpha \frac{(2\pi i)^{|\alpha|}}{\alpha!} \lambda^\alpha  x^\alpha,
	\quad x   \in \R^d,
\end{equation}
where the sum goes over all multi-indices $\alpha$ with nonnegative integer coordinates. As usual, for a multi-index $\alpha=(\alpha_1, \alpha_2, \ldots \alpha_d)$ we denote
\begin{equation}
	x^\alpha = x_1^{\alpha_1} \ldots x_d^{\alpha_d}, \quad
	\alpha! = \alpha_1! \ldots \alpha_d!, \quad 
	|\alpha| = \alpha_1 + \ldots + \alpha_d.
\end{equation}

Let now $\theta_n$ be arbitrary scalars with $|\theta_n|\le 1$, and such that only finitely many $\theta_n$ are nonzero. Given $f\in L^p(\Omega)$, we have
\begin{align}
	& \sum_n \theta_n g_n^*(f) (e_{\mu_n}(x) - e_{\lambda_n}(x)) = \sum_n \theta_n g_n^*(f) e_{\lambda_n}(x)(e_{\mu_n-\lambda_n}(x) - 1) \label{eq:3.2.5}	 \\ 
		& \qquad = \sum_n \theta_n g_n^*(f) e_{\lambda_n}(x) \sum_{\alpha\neq 0} \frac{(2\pi i)^{|\alpha|}}{\alpha!}(\mu_n-\lambda_n)^{\alpha} x^\alpha. \label{eq:3.3}
\end{align}
Here we used the Taylor expansion \eqref{eq:3.1} in order to pass from \eqref{eq:3.2.5} to \eqref{eq:3.3}. The sum over $\alpha$ in \eqref{eq:3.3} converges uniformly on the bounded set $\Omega$.

Applying the triangle inequality and the estimate $|x^\alpha|\le R^{|\alpha|}$ for $x\in\Omega$, we obtain that the $L^p(\Omega)$ norm of the left hand side of \eqref{eq:3.2.5} does not exceed
\begin{align}
	&\sum_{\alpha\neq 0} \Big\|\frac{(2\pi i)^{|\alpha|}}{\alpha!} x^\alpha   \sum_n (\mu_n-\lambda_n)^\alpha \theta_n g_n^*(f)e_{\lambda_n}(x)  \Big\|_p\\
	& \qquad\le \sum_{\alpha\neq 0} \frac{(2\pi R)^{|\alpha|}}{\alpha!}   \Big\| \sum_n (\mu_n-\lambda_n)^\alpha \theta_n g_n^*(f)e_{\lambda_n} \Big\|_p.\label{eq:3.5}
\end{align}
Since $|(\mu_n-\lambda_n)^\alpha|\le \eta^{|\alpha|}$, the $K$-unconditionality of the system $\{(e_{\lambda_n}, g_n^*)\}$ implies that the quantity \eqref{eq:3.5} is not greater than
\begin{equation}
	\label{eq:3.6}
	K \|f\|_p \sum_{\alpha\neq 0} \frac{(2\pi \eta R)^{|\alpha|}}{\alpha!}   = K  (e^{2\pi d \eta R} - 1) \| f \|_p.
\end{equation}

Thus, given $\eps>0$, by choosing $\eta = \eta(K, R,d,\eps) > 0$ sufficiently small, we can make the right hand side of \eqref{eq:3.6} not greater than $\eps \|f\|_p$. We conclude that
\begin{equation}
	\label{eq:3.7}
	\Big\| \sum_n \theta_n g_n^*(f) (e_{\mu_n} - e_{\lambda_n})\Big\|_p\le \eps \|f\|_p 
\end{equation}
for any $f \in L^p(\Omega)$ and for arbitrary scalars $\theta_n$ with $|\theta_n| \le 1$, and such that only finitely many $\theta_n$ are nonzero.

We can now finish the proof by standard arguments. 
Indeed, by \eqref{eq:3.7} and
 \lemref{lem:unc_criterion},  the series
$\sum_n g_n^*(f) (e_{\mu_n} - e_{\lambda_n})$
converges unconditionally in $L^p(\Omega)$.
Clearly, the series $\sum_n  g_n^*(f) e_{\lambda_n}$ is also unconditionally convergent, 
since the system $\{(e_{\lambda_n}, g_{n}^*)\}$ forms an unconditional Schauder frame in $L^p(\Omega)$. It follows that the series
\begin{equation}
	Sf = 	\sum_n  g_n^*(f) e_{\mu_n}
\end{equation}
converges unconditionally in $L^p(\Omega)$, and defines a bounded linear operator  $S: L^p(\Omega) \to L^p(\Omega)$. Moreover, \eqref{eq:3.7} gives $\|S - I\| \le \eps$, where $I$ is the identity operator. Hence, by choosing $0 < \eps < 1$, we obtain that the operator $S$ is invertible. This means that  $\{(e_{\mu_n}, g_n^*)\}$ forms an unconditional approximate Schauder frame in $L^p(\Omega)$.
\end{proof}


\section{Transference to the torus}

The next step of the proof is to transfer the problem about unconditional Schauder frames of exponentials
to the $d$-dimensional torus $\T^d = \R^d / \Z^d$.

\begin{lem}
\label{lem:torus}
Assume that $\Omega\subset\R^d$ is a bounded, measurable set of positive measure, and that $\{(e_{\lambda_n}, g_{n}^*)\}$ forms an unconditional Schauder frame in $L^p(\Omega)$, $1 < p < + \infty$. Then there exists a measurable set $E \subset \T^d$ of positive measure, and a sequence $\{e^*_m\} \sbt L^{p}(E)^*$, such that $\{(e_m, e_m^*)\}$, $m\in\Z^d$, forms an unconditional Schauder frame in $L^p(E)$.
\end{lem}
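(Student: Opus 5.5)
Since $\Omega$ is bounded, a dilation lets us assume $\Omega \subset [0,1)^d$; this only rescales the frequencies. We may therefore regard $\Omega$ as a subset $E$ of $\T^d$ of positive measure, with $L^p(\Omega)=L^p(E)$ isometrically. The idea is to move every frequency $\lambda_n$ to a nearby lattice point, using Lemma~\ref{lem:perturb}. The perturbation must stay at most $\eta$. So first dilate further: replace $\Omega$ by $t\Omega$ with $t>0$ small. Then $e_{\lambda}$ on $\Omega$ corresponds to $e_{\lambda/t}$ on $t\Omega$, and the frame $\{(e_{\lambda_n/t}, \tilde g_n^*)\}$ (with $\tilde g_n^*$ transported by the isometry $f\mapsto t^{-d/p}f(\cdot/t)$) is still an unconditional Schauder frame in $L^p(t\Omega)$, with the same unconditionality constant $K$.

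This scaling changes nothing by itself, since the distance from $\lambda_n/t$ to $\Z^d$ is not small. The remedy is to apply the dilation in the other direction. Take $\Omega\subset[-R,R]^d$ and choose an integer $N$ with $1/N\le\eta$, where $\eta$ is the constant from Lemma~\ref{lem:perturb} for $\Omega$. Pick $\mu_n\in N^{-1}\Z^d$ with $|\mu_n-\lambda_n|\le \sqrt d/(2N)$ (enlarging $N$ if needed). Lemma~\ref{lem:perturb} then shows that $\{(e_{\mu_n},g_n^*)\}$ is an unconditional approximate Schauder frame in $L^p(\Omega)$. Lemma~\ref{lem:approxframe} upgrades it to an unconditional Schauder frame $\{(e_{\mu_n},h_n^*)\}$ with $h_n^*=(S^{-1})^*g_n^*$.

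Now rescale by $x\mapsto Nx$, i.e. pass to $N^{-1}\Omega$. Under $f\mapsto N^{d/p} f(N\cdot)$ the function $e_{\mu_n}$ becomes $e_{N\mu_n}$, whose frequency is in $\Z^d$. So we get an unconditional Schauder frame $\{(e_{m_n},\tilde h_n^*)\}$ in $L^p(\Omega')$ with $m_n\in\Z^d$ and $\Omega'=N^{-1}\Omega$ bounded. Translate $\Omega'$ by a real vector; this multiplies exponentials by unimodular constants, which can be absorbed into the functionals. After that, $\Omega'$ is bounded but may still be larger than a unit cube. Intersect $\Omega'$ with a unit cube $Q=a+[0,1)^d$ chosen so that $|\Omega'\cap Q|>0$. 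The restriction $L^p(\Omega')\to L^p(\Omega'\cap Q)$ is onto, and extension by zero is a norm-one right inverse. Composing the functionals with this extension gives an unconditional Schauder frame of exponentials with integer frequencies on $\Omega'\cap Q$. The quotient map $Q\to\T^d$ is injective and measure preserving, so $\Omega'\cap Q$ projects to a set $E\subset\T^d$ of positive measure. Integer exponentials are well defined on $\T^d$, so $\{(e_{m_n},e_n^*)\}$ is an unconditional Schauder frame in $L^p(E)$.

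It remains to index the frame by $m\in\Z^d$. Several $n$ may give the same lattice point $m$, and some $m$ may not occur. For each $m$ set $e_m^*=\sum_{n:\,m_n=m} e_n^*$ (weak$^*$ sum), and $e_m^*=0$ if $m$ does not occur. Here is the main obstacle: we must check that this sum converges in the dual and that the regrouped expansion still converges unconditionally. The sum over $n$ with $m_n=m$ of $e_n^*(f)\,e_m$ is a subseries of an unconditionally convergent series, so it converges in norm; since $e_m\ne0$, the scalar series $\sum_{m_n=m}e_n^*(f)$ converges. Its partial sums are bounded in the dual uniformly, by Lemma~\ref{lem:unc_constant} together with $\|e_m\|_p=|E|^{1/p}$. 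Hence $e_m^*$ is a well-defined bounded functional, by the Banach–Steinhaus theorem. Unconditional convergence of $\sum_m e_m^*(f)e_m$ then follows from Lemma~\ref{lem:unc_criterion}. Indeed, for $|\theta_m|\le1$ a finite sum $\sum_m\theta_m e_m^*(f)e_m$ is a limit of sums $\sum_n\theta_{m_n}e_n^*(f)e_{m_n}$, and these have norm at most $K\|f\|_p$. Its sum equals $f$ by regrouping, since unconditional convergence allows arbitrary grouping. This completes the proof.
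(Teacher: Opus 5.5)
Your argument is correct and is essentially the paper's proof. You snap the frequencies to the fine lattice $N^{-1}\Z^d$ (the paper's $\gamma\Z^d$ with $\gamma=1/N$), apply Lemma~\ref{lem:perturb} and then Lemma~\ref{lem:approxframe}, regroup repeated frequencies, and rescale so that all frequencies become integers; your verification that the regrouped functionals are bounded fills in a step the paper only sketches. The translation and the restriction to $\Omega'\cap Q$ are valid but unnecessary: also taking $N\ge 2R$ already gives $N^{-1}\Omega\subset[-\frac12,\frac12]^d$, which is how the paper's single small $\gamma$ serves both purposes. Your opening paragraphs about dilating by $t$ are an abandoned false start and can be deleted.
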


\begin{proof}
Let $\eta$ be the number given by \lemref{lem:perturb}, and
choose a small $\gamma>0$. For each $n$, let $\mu_n$ be a closest point to $\lambda_n$ in the lattice $\gamma\Z^d$. If $\gamma $  is chosen sufficiently small, then we get
 $| \mu_n - \lambda_n| \le \eta$ for all $n$. Hence,
by \lemref{lem:perturb}, the system $\{(e_{\mu_n}, g_{n}^*)\}$ forms an unconditional approximate Schauder frame in $L^p(\Omega)$. In turn, an application of \lemref{lem:approxframe} gives us a sequence $\{h^*_n\} \subset L^p(\Omega)^*$ such that the system $\{(e_{\mu_n}, h_n^*)\}$ forms an unconditional Schauder frame in $L^p(\Omega)$.
	
It is possible that certain points of the lattice $\gamma\Z^d$ appear in the sequence $\{\mu_n\}$ several times. However by grouping the summands in the unconditionally convergent expansion
$f = \sum_n h_n^*(f) e_{\mu_n}$, and adding, if necessary, zeros as coefficient functionals, we get an unconditional Schauder frame of the form $\{(e_{\gamma m}, h_m^*)\}$, $m\in\Z^d$, for the space $L^p(\Omega)$.

Finally, after rescaling by $\gamma$, we obtain a new system $\{(e_m, e_m^*)\}$, $m\in\Z^d$, which forms an unconditional Schauder frame in $L^p(E)$, where $E := \gamma \Omega$. To be more precise, the new coefficient functionals $\{e_m^*\}$ are given by $e_m^* = U^* (h_m^*)$, where $U: L^p(E) \to L^p(\Omega)$ is the operator defined by $(Uf)(x) = f(\gamma x)$. 
It remains to notice that if $\gamma$ is sufficiently small then we get $ E \subset[-\frac12, \frac12]^d$, hence $E$ may be identified with a subset of $\T^d$.
\end{proof}


\section{An extension operator}

Thanks to \lemref{lem:torus}, it would suffice to prove 
that if $E\subset\T^d$ is an arbitrary measurable set of positive measure, and if $\{e_m^*\}$ are arbitrary elements of $L^p(E)^*$, then the system  $\{(e_m, e_m^*)\}$, $m\in\Z^d$, cannot form an unconditional Schauder frame in $L^p(E)$.

\subsection{}
Suppose to the contrary that 
 $\{(e_m, e_m^*)\}$, $m\in\Z^d$, is a
$K$-unconditional Schauder frame in $L^p(E)$, i.e.,
any $f\in L^p(E)$ admits an unconditionally convergent expansion 
\begin{equation}
\label{eq:unctorus}
f = \sum_{m\in\Z^d} e_m^*(f) e_m,
\end{equation}
and for arbitrary scalars $\theta_m$  with $|\theta_m|\le 1$, we have
\begin{equation}
\label{eq:unctorus_theta}
\Big\|\sum_{m\in\Z^d} \theta_m e_m^*(f) e_m\Big\|_{L^p(E)} \le K\|f\|_{L^p(E)}.
\end{equation}

The next step in the proof is to show that in this case, the series \eqref{eq:unctorus} in fact converges unconditionally in $L^p$  on the whole torus $\T^d$.

\begin{lem}\label{lem:extension}
For every $f\in L^p(E)$, the series
\begin{equation}
	\label{eq:Af}
	Af = \sum_{m\in\Z^d} e_m^*(f) e_m
\end{equation}
converges unconditionally in $L^p(\T^d)$. Moreover, 
\begin{equation}
	\label{eq:est_extension}
	\Big\|\sum_{m\in\Z^d} \theta_m e_m^*(f) e_m  \Big\|_{L^p(\T^d)}\le K m(E)^{-1/p}  \|f\|_{L^p(E)}
\end{equation}
for every choice of scalars $\theta_m$ with $|\theta_m|\le 1$.
\end{lem}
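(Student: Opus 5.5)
The idea is to exploit translation invariance of the integer-frequency exponentials on the torus. Multiplying by $e_m(t)$ is a character, so we can shift the frequencies. More useful is that for $t\in\T^d$, the function $x\mapsto \sum_m \theta_m e_m^*(f) e_m(x+t)$ equals $\sum_m \theta_m e_m(t) e_m^*(f) e_m(x)$. The new coefficients $\theta_m e_m(t)$ are still bounded by $1$ in modulus. We therefore control every translate of the partial sum on $E$, and averaging over translates recovers the full $L^p(\T^d)$ norm.

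First take a finitely supported choice of scalars $\theta_m$ with $|\theta_m|\le1$, and set $P(x)=\sum_m \theta_m e_m^*(f)e_m(x)$, a trigonometric polynomial on $\T^d$. For each $t\in\T^d$ apply \eqref{eq:unctorus_theta} with the scalars $\theta_m e_m(t)$. This gives
\begin{equation*}
\int_E |P(x+t)|^p\,dx \le K^p\|f\|_{L^p(E)}^p \quad\text{for all } t\in\T^d .
\end{equation*}
Next integrate over $t\in\T^d$ and use Fubini. Since $\int_{\T^d}|P(x+t)|^p\,dt=\|P\|_{L^p(\T^d)}^p$ for every $x$, the left side equals $m(E)\|P\|_{L^p(\T^d)}^p$. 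Hence $\|P\|_{L^p(\T^d)}\le K m(E)^{-1/p}\|f\|_{L^p(E)}$, which is \eqref{eq:est_extension} for finite sums.

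For the convergence statement, apply \lemref{lem:unc_criterion} to the series $\sum_m e_m^*(f)e_m$ in the reflexive space $L^p(\T^d)$, $1<p<\infty$. The uniform bound on all finite partial sums with coefficients $|\theta_m|\le1$ was just obtained, so the series converges unconditionally in $L^p(\T^d)$. The estimate \eqref{eq:est_extension} for arbitrary (infinitely supported) $\theta$ then follows by passing to the limit along finite partial sums. Those sums converge, since multiplying the terms of an unconditionally convergent series by bounded scalars preserves convergence.

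The only subtle step is the first one. We must check that \eqref{eq:unctorus_theta} may be applied with the $t$-dependent scalars $\theta_m e_m(t)$, finitely supported, with the inequality valid pointwise in $t$. We also need the measurability in $(x,t)$ needed for Fubini, which is immediate because $P$ is a trigonometric polynomial.
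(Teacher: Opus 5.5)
Your proposal is correct and follows the paper's proof essentially verbatim. You apply \eqref{eq:unctorus_theta} with the modulated coefficients $\theta_m e_m(t)$ to bound every translate of the trigonometric polynomial on $E$, integrate over $t\in\T^d$ using Fubini to get $m(E)\|P\|_{L^p(\T^d)}^p\le K^p\|f\|_{L^p(E)}^p$, and then invoke \lemref{lem:unc_criterion} for unconditional convergence before passing to general $\theta$; the only cosmetic difference is that you translate by $+t$ where the paper uses $x-t$ with coefficients $e_m(-t)$.
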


Since the series \eqref{eq:Af} converges to $f$ on the set $E$, the operator $A$ can therefore be viewed as a bounded extension operator, acting from $L^p(E)$ to $L^p(\T^d)$.

\begin{proof}[Proof of Lemma~\ref{lem:extension}]
Fix a function $f \in L^p(E)$, and let
$\theta_m$ be a sequence of scalars with $|\theta_m|\le 1$, 
and such that only finitely many $\theta_m$ are nonzero. Define
\begin{equation}
\pphi(x) = \sum_{m\in\Z^d} \theta_m e_m^*(f) e_m(x), \quad x \in \T^d,
\end{equation}
then $\pphi$ is a trigonometric polynomial. Using \eqref{eq:unctorus_theta}, for any $t \in \T^d$ we have
\begin{equation}
\int_{E} |\pphi(x-t)|^p dx 
= \Big\| \sum_{m\in\Z^d} \theta_m 
e^{-2\pi i \dotprod{m}{t}}
e_m^*(f) e_m\Big\|^p_{L^p(E)} \le K^p \|f\|^p_{L^p(E)}.
\end{equation}
Integrating this inequality with respect to the parameter $t \in \T^d$, we obtain
\begin{equation}
	m(E) \|\pphi\|_{L^p(\T^d)}^p = \int_E  \int_{\T^d}|\pphi(x-t)|^p dt dx = \int_{\T^d}\int_E |\pphi(x-t)|^p dx dt\le K^p \|f\|^p_{L^p(E)}.
\end{equation}
Thus, the estimate \eqref{eq:est_extension} holds
for any $f \in L^p(E)$ and any finitely supported sequence  $\{ \theta_m \}$ with $|\theta_m|\le 1$.
An application of \lemref{lem:unc_criterion} then implies the  unconditional convergence of the series \eqref{eq:Af}. In turn, this allows us to extend the estimate \eqref{eq:est_extension} from finitely supported sequences $\{\theta_m\}$ to all sequences with $|\theta_m|\le 1$.
\end{proof}

\subsection{}
\lemref{lem:extension} thus gives us  a bounded extension operator $A : L^p(E) \to L^p(\T^d)$ given by the unconditionally convergent series \eqref{eq:Af}. The dual operator may be viewed as an operator  $A^* : L^q(\T^d) \to L^q(E)$, where $q = p/(p-1)$.

\begin{lemma}
\label{lem:extension_dual}
The dual operator $A^* : L^q(\T^d) \to L^q(E)$ is given by
\begin{equation}
\label{eq:Astarg}
A^*g = \sum_{m\in\Z^d}  \ft{g}(-m) e_m^*, \quad g \in L^q(\T^d),
\end{equation}
and the series converges unconditionally in $L^q(E)$.  Moreover, 
\begin{equation}
	\label{eq:est_extension_dual}
	\Big\|\sum_{m\in\Z^d} \theta_m \ft{g}(-m) e_m^* \Big\|_{L^q(E)}\le K m(E)^{-1/p}  \|g\|_{L^q(\T^d)}
\end{equation}
for every choice of scalars $\theta_m$ with $|\theta_m|\le 1$.
\end{lemma}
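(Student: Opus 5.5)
We will obtain the formula for $A^*$ by pairing, and get unconditional convergence and the bound \eqref{eq:est_extension_dual} by duality from Lemma~\ref{lem:extension}. Throughout we identify $L^p(E)^*$ with $L^q(E)$ and $L^p(\T^d)^*$ with $L^q(\T^d)$ via the bilinear pairing $\dotprod{u}{v}=\int u v$. Thus $e_m^*(f)=\int_E f\, e_m^*$, and for $g\in L^q(\T^d)$ we have $\int_{\T^d} e_m g = \ft{g}(-m)$.

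\emph{Step 1: the duality computation.} Fix $f\in L^p(E)$ and $g\in L^q(\T^d)$. Since the series \eqref{eq:Af} converges in $L^p(\T^d)$, pairing with $g$ is continuous and gives
\[
\dotprod{Af}{g}=\sum_{m\in\Z^d} e_m^*(f)\int_{\T^d} e_m g=\sum_{m\in\Z^d}\ft{g}(-m)\, e_m^*(f).
\]
So $A^*g$ is, at least formally, $\sum_m \ft g(-m)e_m^*$. To make this rigorous we must show that the series converges in norm in $L^q(E)$.

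\emph{Step 2: uniform bound on finite partial sums.} Let $\{\theta_m\}$ be finitely supported with $|\theta_m|\le 1$, and set $h=\sum_m \theta_m\ft g(-m)e_m^*\in L^q(E)$. For $f\in L^p(E)$ we compute
\[
\dotprod{f}{h}=\sum_m\theta_m\ft g(-m)e_m^*(f)=\Big\langle \sum_m\theta_m e_m^*(f)e_m,\; g\Big\rangle_{\T^d}.
\]
By H\"older's inequality and \eqref{eq:est_extension}, this is at most $K m(E)^{-1/p}\|f\|_{L^p(E)}\|g\|_{L^q(\T^d)}$. Taking the supremum over $f$ in the unit ball gives
\[
\|h\|_{L^q(E)}\le K m(E)^{-1/p}\|g\|_{L^q(\T^d)}.
\]

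\emph{Step 3: convergence and conclusion.} Since $1<q<\infty$, the space $L^q(E)$ is reflexive. Lemma~\ref{lem:unc_criterion} together with Step 2 therefore shows that $\sum_m \ft g(-m)e_m^*$ converges unconditionally in $L^q(E)$. By Step 1, its sum pairs with every $f$ exactly as $A^*g$ does, so the two coincide, which proves \eqref{eq:Astarg}. For a general bounded sequence $\{\theta_m\}$, the series $\sum_m\theta_m\ft g(-m)e_m^*$ also converges, because unconditional convergence implies bounded-multiplier convergence. The bound \eqref{eq:est_extension_dual} then follows by passing to the limit in Step 2 along finite truncations.

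The only point needing care is the handling of conventions, namely the bilinear pairing and the sign in $\ft g(-m)$. Beyond that, the argument is a routine duality argument.
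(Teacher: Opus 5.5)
Your proposal is correct and follows exactly the route the paper indicates: it says this lemma follows by a standard duality argument together with Lemma~\ref{lem:unc_criterion}, and omits the details. You supply those details correctly: you bound the finitely supported sums by testing against $f$ and applying \eqref{eq:est_extension}, use reflexivity of $L^q(E)$ to get unconditional convergence, identify the sum with $A^*g$ via the pairing computation, and extend the bound to general $\{\theta_m\}$ by a limit argument.
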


Here, $\ft{g}(-m) =
\int_{\T^d} g(x) e_{m}(x) dx$ denote the Fourier coefficients of $g$.

\lemref{lem:extension_dual} can be proved by a standard duality argument and an application of \lemref{lem:unc_criterion}, and so we omit the details.

\subsection{}
Let us now put a Rademacher sequence instead of $\theta_m$ in \eqref{eq:est_extension_dual} and take expectation. Since $1<q < 2$, we can apply the cotype $2$ inequality (\lemref{lem:cotype}) in the space $L^q(E)$ and obtain the  estimate 
\begin{equation}
\label{eq:est_functionals_rad}
\Big(\sum_{m\in \Z^d} |\ft{g}(-m)|^2 \|e_m^*\|_{L^q(E)}^2\Big)^{1/2}
\le  K C_q  m(E)^{-1/p}  \|g\|_{L^q(\T^d)},
\end{equation}
where $g$ is an arbitrary function in $L^q(\T^d)$.


\section{A contradiction}

Let $I_N = \{-N,\ldots, N\}^d$ be the discrete cube of side length $2N+1$ in $\Z^d$. We use it to define a sequence $\{f_N\}$, $N=1,2,\ldots$, 
of functions in $L^p(\T^d)$, given by
\begin{equation}
\label{eq:f_eps_def}
f_N(x) = \frac1{|I_N|} \sum_{m \in I_N} 
e_{-m}(x) (Ae_m)(x), \quad x \in \T^d.
\end{equation}
We observe that  $\{f_N\}$ is a bounded sequence in the space $L^p(\T^d)$. Indeed, it follows from \eqref{eq:Af}, \eqref{eq:est_extension} that  $\|A e_m\|_{L^p(\T^d)} \le K$
for every $m \in \Z^d$, and therefore,
\begin{equation}
\label{eq:f_N_bdd}
\| f_N \|_{L^p(\T^d)} \le K, \quad N= 1,2,\ldots.
\end{equation}

Next, we claim that $f_N$ is  a weakly null sequence in $L^p(\T^d)$,  i.e., for every $g \in L^q(\T^d)$, $q=p/(p-1)$, we have $\int_{\T^d} f_N g \to 0$ as $N \to \infty$. 
Due to \eqref{eq:f_N_bdd}, it 
suffices to check this for each function $g$ 
belonging to some complete system of functions
in $L^q(\T^d)$. We will prove that this holds for any
exponential function $e_n$, $n \in\Z^d$.
In other words, it remains to show that 
for every $n \in \Z^d$ we have
$\ft{f}_N(-n)  \to 0$ as $N \to \infty$.

Indeed, given $n \in\Z^d$, we have
\begin{equation}
\ft{f}_N(-n) 
= \frac1{|I_N|} \sum_{m \in I_N}  \int_{\T^d} e_{n-m}(x) (Ae_m)(x)  dx 
= \frac1{|I_N|} \sum_{m\in I_N} e^*_{m-n}(e_m),
\end{equation}
where the first equality follows from \eqref{eq:f_eps_def},
while the second follows from \eqref{eq:Af}
and the orthogonality of the exponential functions $\{e_m\}$, $m\in\Z^d$, on the torus $\T^d$.

An application of the Cauchy--Schwarz inequality
(or Jensen's inequality) yields
\begin{equation}
\label{eq:CS_applied}
| \ft{f}_N(-n)  |^2 
\le  \frac1{|I_N|} 
\sum_{m\in I_N} |e^*_{m-n}(e_m)|^2 
\le  \frac{m(E)^{2/p}}{(2N+1)^d} 
\sum_{m\in I_N} \| e_{m-n}^* \|^2_{L^q(E)},
\end{equation}
where in the second inequality we used 
$\|e_m\|_{L^p(E)} = m(E)^{1/p}$ and $|I_N| = (2N+1)^d$.

Now, consider the $d$-dimensional Dirichlet kernel defined by
\begin{equation}
D_N(x) = \sum_{m\in I_N} e_m(x), \quad x \in \T^d.
\end{equation}
Then $\|D_N\|_{L^q(\T^d)}\le M_{d,q} N^{d(q-1)/q}$ where the constant $M_{d,q}$ depends only on $q$ and the dimension $d$.
Indeed, for $d=1$ this follows from the well-known pointwise estimate $|D_N(x)|\le C\min\{N, |x|^{-1}\}$ for $x\in [-\frac12, \frac12]$. In order to obtain the required estimate in all dimensions $d$, we simply multiply $d$ one-dimensional estimates.

Let us apply \eqref{eq:est_functionals_rad} with 
the function $g(x) = e_{n}(x) D_N(x)$. Since we have $\ft{g}(-m) = 1$ if $m+n\in I_N$, and $\ft{g}(-m) = 0$ otherwise, and using $p = q/(q-1)$, we obtain
\begin{equation}
\label{eq:est_functionals_sum}
 \sum_{m\in I_N} \| e_{m-n}^* \|^2_{L^q(E)} 
\le  K^2 C_q^2 M_{d,q}^2 m(E)^{-2/p}  N^{2d/p}.
\end{equation}
Using the assumption $p>2$, it follows
from \eqref{eq:CS_applied}, \eqref{eq:est_functionals_sum} that $\ft{f}_N(-n)  \to 0$ as $N \to \infty$.
This proves that $f_N$ is  a weakly null sequence in
the space $L^p(\T^d)$.

Finally, in order to obtain a contradiction, we
apply \eqref{eq:unctorus} to the function $f = e_m$
considered as an element of the space $L^p(E)$,
and conclude that $e_{-m}(x) (A e_m)(x) = 1$ a.e.\ on $E$. As a consequence, we get from \eqref{eq:f_eps_def} that $f_N(x) = 1$ a.e.\ on $E$. However, this contradicts the fact that $f_N$ is a weakly null sequence in $L^p(\T^d)$, e.g., by testing against the function $g = \1_E$. This contradiction completes the proof of \thmref{thm:mainexp}.
\qed


\end{document}